\theoremstyle{plain}
\newtheorem{theorem}{Theorem}[section]
\newtheorem{example}[theorem]{Example}
\newtheorem{lemma}[theorem]{Lemma}
\theoremstyle{definition}
\theoremstyle{remark}
\newtheorem{remark}[theorem]{Remark}
\begin{document}
\title{Simple wavelet sets in $\mathbb R^n$}
\author[K.
 D.
 Merrill]{Kathy~D.
~Merrill}
\address{Kathy Merrill, Department of Mathematics, Colorado College, Colorado Springs, Colorado, 80903, USA}
\email{kmerrill@coloradocollege.edu}
\subjclass[2010] {Primary 42C40, 52C22}
\keywords{wavelet set, tiling}
\begin{abstract}
Wavelet sets that are finite unions of convex sets are constructed in $\mathbb R^n$, $n\geq 2$, for dilation by any expansive matrix that has a power equal to a scalar times the identity and also has all singular values greater than $\sqrt n$.  In particular, we produce simple wavelet sets in every dimension for dilation by any real scalar greater than 1.  \end{abstract}

\maketitle
\section{Introduction}
A \textit{wavelet set} relative to dilation by an expansive (all eigenvalues greater than 1 in absolute value) real $n\times n$ matrix $A$ is a set $W\subset \mathbb R^n$ whose characteristic function $\bold 1_W$ is the Fourier transform of an orthonormal wavelet.  
That is, if $\widehat\psi=\bold 1_W,$ then 
$\left\{\psi_{j,k}\equiv\sqrt{|\det A|^j}\psi(A^j\cdot-k),\; j\in\mathbb Z, k\in\mathbb Z^n\right\}$
 is an orthonormal basis for $L^2(\mathbb R^n)$.      This definition is equivalent to the requirement that the
 set $W\subset\mathbb R^n$ tiles $n-$dimensional space (almost everywhere) both under translation by $\mathbb Z^n$ and under dilation by the transpose $A^*$, so that 
 $$\sum_{k\in\mathbb Z^n}\bold 1_W(x+k)=1\quad a.e.\; x\in\mathbb R^n, \mbox{ and}$$
 $$\sum_{j\in\mathbb Z} \bold 1_W(A^{*j}x)=1\quad a.e.\; x\in\mathbb R^n.$$
\par While wavelet set wavelets are not well-localized, and thus not directly useful for applications, they have proven to be an essential tool in developing wavelet theory.  In particular, wavelet set examples established that not all wavelets have an associated MRA \cite{dau}, and that single wavelets exist for an arbitrary expansive matrix in any dimension \cite{dls}.  Smoothing and interpolation techniques have also used wavelet set wavelets to produce more well-localized examples.  (See e.g. \cite{hww}, \cite{hww2}, \cite{dl} , \cite{bs},  \cite{bjmp}, \cite{m3}, \cite{bk}.)

All of the early examples of wavelet sets for dilation by non-determinant 2 matrices in dimension greater than 1 were geometrically complicated, showing the fingerprints of the infinite iterated process used to construct them.  (See e.g. Figure \ref{dim2}(a)).  Many early researchers, e.g.\cite{bs}, \cite{sw}, conjectured  that a wavelet set for dilation by 2 in dimension greater than 1 could not be written as a finite union of convex sets.  In support of this conjecture, Benedetto and Sumetkijakan \cite{bs} showed that a wavelet set for dilation by 2 in $\mathbb R^n$ cannot be the union of $n$ or fewer convex sets.  However, in 2004, Gabardo and Yu \cite{gy} used self-affine tiles to produce a wavelet set for dilation by 2 in $\mathbb R^2$ that is a finite union of polygons (Figure \ref{dim2}(b)).   In 2008 \cite{m1} we used a technique based on generalized multiresolution analyses \cite{hkl} to construct such wavelet sets for arbitrary real ($d>1$) scalar dilations in $\mathbb R^2$.   Figure \ref{dim2}(c) shows one of the wavelet sets for dilation by 2 from \cite{m1}.  Although they were developed independently, and using very different techniques, these two examples are remarkably similar.  In fact, the wavelet sets in Figure \ref{dim2}(b) and \ref{dim2}(c) are equivalent in the sense that one can be transformed into the other under multiplication by a determinant 1 integer matrix.  The similar shape of these two wavelet sets suggests the general $n$-dimensional result produced in this paper.    
 \begin{figure}[h]
\label{dim2}
\centering $\begin{array}{ccc}
   \setlength{\unitlength}{80bp}
\begin{picture}(1,0.6)(0.3,-.2)
\put(0,0){\includegraphics[width=\unitlength]{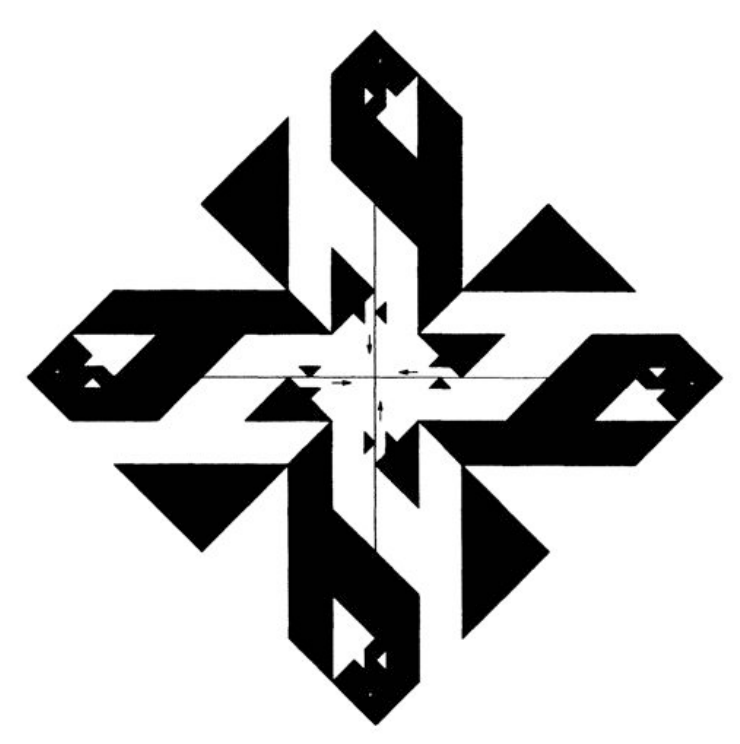}}
\end{picture}
&
\setlength{\unitlength}{100bp}
\begin{picture}(1,1)(.0,-.2)
\put(0,0){\includegraphics[width=\unitlength]{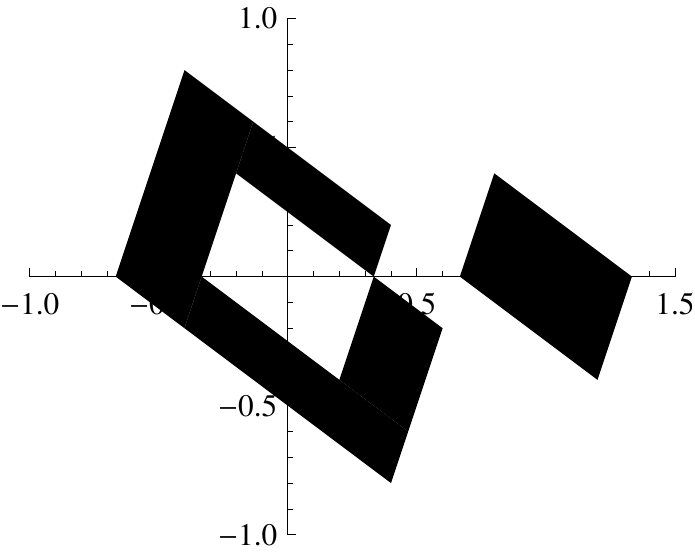}}
\end{picture}
&
 \setlength{\unitlength}{120bp}
\begin{picture}(1,0.0)(-.2,-.2)
\put(0,0){\includegraphics[width=\unitlength]{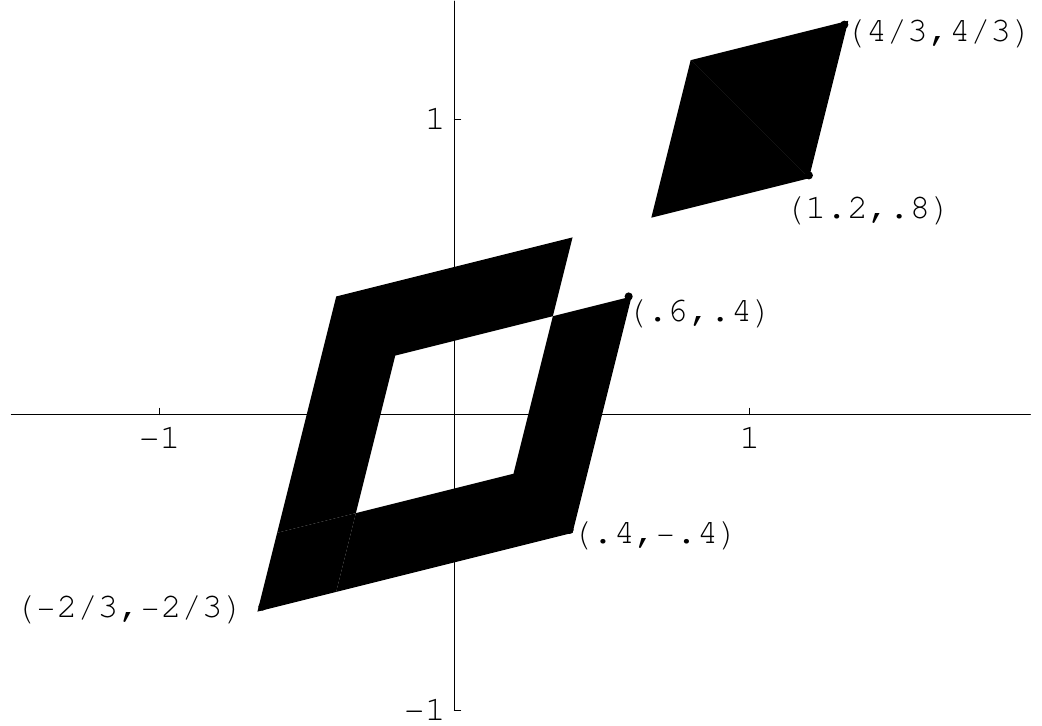}}
\end{picture}\\
\mbox{Soardi/Wieland 1998\quad\quad\quad}&\mbox{Gabardo/Yu 2004}&\mbox{\quad\quad Merrill 2008}\\
\end{array}$
\caption{Wavelet sets for dilation by 2 in $\mathbb R^2$} 
\end{figure}
\par We call wavelet sets that are finite unions of convex sets {\it simple wavelet sets}.  In 2012 \cite{m2}, we expanded the results in \cite{m1} to produce simple wavelet sets for dilation by any $2\times 2$ matrix that has a positive integer power equal to a scalar times the identity, as long as its singular values are all greater than $\sqrt 2$.   In that paper, we also found examples of expansive $2\times 2$ matrices that cannot have simple wavelet sets.  It is our conjecture that, in any dimension, an expansive matrix whose determinant does not have absolute value equal to 2 can have a simple wavelet set if and only if it has a positive integer power equal to a scalar times the identity.  
\par In this paper, we generalize the 2-dimensional examples in \cite{m2} to $n$-dimensional space, $n\geq 2$.   We do this using neither the generalized multi-resolution analysis techniques of \cite{m2}, nor the self-affine techniques of \cite{gy}.  Rather, we use a remarkable result by Sherman Stein \cite{ss} on tiling $\mathbb R^n$ with notched cubes, together with the tiling conditions that are equivalent to the definition of a wavelet set.   Section 2 presents Stein's result, and then skews and translates the notched n-cubes to produce notched parallelotopes that are simple wavelet sets for dilation by negative scalars.  
\par Section 3 further modifies these notched parallelotopes by translating out a central parallelotope (as in Figure \ref{dim2}(b) and \ref{dim2}(c)).  
Using this technique, Theorem \ref{main} creates simple wavelet sets for dilation by any scalar $d\geq 2$.  This result establishes counterexamples, in every dimension greater than 1, to the conjecture that wavelet sets for dilation by 2 cannot be finite unions of convex sets.   These counterexamples are composed of $2n+1$ convex sets for dimension $n$, as compared to the lower bound of $n+1$ given in the Benedetto/Sumetkijakan result mentioned above. Theorem \ref{matrix} generalizes Theorem \ref{main} to dilation by matrices that have a positive integer power equal to a scalar, as long as their singular values are not too small.   One consequence of this theorem is to create  simple wavelet sets for dilation by a scalar $d$ with $1<d<2$, thus  completing the scalar dilation case of the existence question for simple wavelet sets.  For non-scalar dilations in dimension 3 and higher, Theorem \ref{matrix}   
offers support to the sufficiency direction of the conjecture above concerning exactly which matrices have associated simple wavelet sets.  The examples that end Section 3 further support this conjecture by showing that the Theorem's additional condition on singular values need not always hold for matrices that have simple wavelet sets. 

 \section{Tiling with notched parallelotopes}
 
 We begin by establishing some notation.  Write $\{e_1,e_2,\cdots e_n\}$ for the standard basis of $\mathbb R^n$, and $C$ for the cyclic permutation matrix with columns $(e_2,e_3,\dots,e_n,e_1)$.   Let $\vec 1$ stand for the vector $(1,1,\dots,1)\in\mathbb R^n$, and write $\tau_t$ for translation by $t\in\mathbb R^n$.     
 
 Given a vector $v=(v_1,v_2,\cdots,v_n)$ in $\mathbb R^n$ that is not a multiple of $\vec 1$, let 
 $$\mathcal P[v]=\{x_0v+x_1C(v)+\dots x_{n-1}C^{n-1}v\;:\;0\leq x_i\leq 1\}$$
 be the parallelotope spanned by the vectors $\{v,C(v),\dots,C^{n-1} (v)\}$.  Note that the spanning vectors of $\mathcal P[v]$ have equal length, since they are permutations of the same vector $v$.  In particular, when $n=2$, $\mathcal P[v]$ is a rhombus.  Note also that $\mathcal P[v]$ has two vertices on the line determined by $\vec 1$:  one at the origin, and  the other determined by the sum of the coordinates of $v$, at $(\sum v_i)\vec 1$, where the vector   Given an $\alpha$, $0<\alpha<1$, write $\mathcal{N}[v,\alpha]$ for the notched parallelotope that results from deleting a subparallelotope scaled by $\alpha$ from the vertex $(\sum v_i)\vec1$ of $\mathcal P[v]$.  That is, let       
$$\mathcal N[v,\alpha]=\mathcal P[v]\setminus\;\tau_{(1-\alpha)(\sum v_i)\vec 1}\;\alpha \mathcal P[v],$$
or equivalently,
\begin{equation*}
\label{notch}
\mathcal N[v,\alpha]=\mathcal P[v]\setminus\;\tau_{(\sum v_i)\vec 1}\;\left(-\alpha \mathcal P[v]\right).
\end{equation*}

We will need the following result about translation tilings by notched cubes due to Sherman Stein.
 
 \begin{lemma}   \label{notchedcube} 
Given a real number $0<\alpha<1$, let $L$ be the lattice spanned by the columns of $I-\alpha C$, where $C$ is the cyclic permutation matrix.  Then the translates of the notched unit cube $\mathcal {N}[e_1,\alpha]$ by the vectors in $L$ tile $\mathbb R^n$. 
 \end{lemma}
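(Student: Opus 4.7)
The plan is to verify the lattice tiling in two stages: first matching the covolume of $L$ to the Lebesgue measure of the notched cube, then establishing that the $L$-translates pack with no overlap.

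\textbf{Volume calculation.} Since $C$ is a cyclic permutation matrix, its eigenvalues are the $n$-th roots of unity $\omega^k = e^{2\pi i k/n}$, so the eigenvalues of $I - \alpha C$ are $1 - \alpha\omega^k$ and
\[
\det(I - \alpha C) = \prod_{k=0}^{n-1}(1 - \alpha\omega^k) = 1 - \alpha^n.
\]
This equals the Lebesgue measure of $\mathcal{N}[e_1,\alpha] = [0,1]^n \setminus [1-\alpha, 1]^n$, so the candidate tile has the correct volume to be a fundamental domain for $L$; in particular, establishing packing will automatically imply covering.

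\textbf{A key lattice vector.} Summing the generators of $L$ gives $\sum_{i=1}^n (e_i - \alpha e_{i+1}) = (1-\alpha)\vec{1} \in L$. This is precisely the translation taking the cube $[0,\alpha]^n$ (with corner at the origin) to the removed notch $[1-\alpha, 1]^n$ (with corner at $\vec{1}$), so modulo $L$ the notch region coincides with a cube just outside the unit cube. The strategy is to view $[0,1]^n$ as covering the torus $\mathbb{R}^n / L$ with multiplicity two on the notch (once as itself, once via the $L$-translate by $(1-\alpha)\vec{1}$) and multiplicity one elsewhere, so that deleting the notch leaves a fundamental domain.

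\textbf{Main obstacle: the packing step.} The hard part will be proving that distinct $L$-translates of $\mathcal{N}[e_1, \alpha]$ have measure-zero intersection. Suppose $p - q = \ell \in L$ with $p, q$ in the interior of $\mathcal{N}[e_1,\alpha]$, and write $\ell = \sum_i c_i(e_i - \alpha e_{i+1})$ with $c_i \in \mathbb{Z}$. The coordinate constraint $p_i - q_i = c_i - \alpha c_{i-1} \in (-1, 1)$ (indices mod $n$), iterated cyclically around the $n$ indices, yields only the bound $|c_i| \leq 1/(1-\alpha)$, which is insufficient for $\alpha$ close to $1$. The notched conditions $\min_i p_i < 1 - \alpha$ and $\min_i q_i < 1 - \alpha$ must then be brought in to force $\ell = 0$. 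This combinatorial interaction between the geometry of the notch and the arithmetic of $L$ is the crux of Stein's theorem, and is where I expect the bulk of the proof's difficulty to lie.
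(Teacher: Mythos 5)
Your proposal does not actually prove the lemma: you set up the problem correctly but explicitly stop at what you yourself identify as ``the crux,'' namely the packing step. The preliminary reductions are all sound --- the computation $\det(I-\alpha C)=\prod_{k=0}^{n-1}(1-\alpha\omega^k)=1-\alpha^n$ does match the measure of $\mathcal N[e_1,\alpha]$, so a packing of the right covolume is automatically a tiling; and the observation that $(1-\alpha)\vec 1=\sum_i(e_i-\alpha e_{i+1})\in L$ carries the corner cube $[0,\alpha]^n$ onto the notch $[1-\alpha,1]^n$ is a genuinely useful structural fact. But the assertion that distinct $L$-translates of the notched cube are a.e.\ disjoint is the entire mathematical content of the statement, and your coordinate analysis correctly shows that the naive bound $|c_i|\le 1/(1-\alpha)$ obtained from $|c_i-\alpha c_{i-1}|<1$ does not force $\ell=0$; the argument that combines this with the notch conditions is exactly what is missing. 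As written, the proposal is an accurate description of what would need to be proved rather than a proof.

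For comparison: the paper does not prove this lemma at all --- it is quoted verbatim from Stein \cite{ss} (``The notched cube tiles $\mathbb R^n$''), and the proof in the text is simply a citation. So your attempt is more ambitious than the paper's treatment, and your framing (volume count plus the double-covering picture on the torus $\mathbb R^n/L$) is a reasonable skeleton for a self-contained proof, but to be acceptable it would have to either complete the packing argument or, like the paper, defer to Stein's published proof.
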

 \begin{proof}
 See \cite{ss}
 \end{proof}
 
We use this result to produce a notched parallelotope that tiles $\mathbb R^n$ under translation by the lattice $\mathbb Z^n$:

 \begin{lemma}   \label{notchedpar}
 For a fixed real number $\alpha$, $0<\alpha<1$, let $w(\alpha)=\frac1{1-\alpha^n}(1,\alpha,\alpha^2,\dots,\alpha^{n-1})$.   Then the translates of $\mathcal {N}[w(\alpha),\alpha]$ by $\mathbb Z^n$ tile $\mathbb R^n$.   
 \end{lemma}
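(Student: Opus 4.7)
The plan is to exhibit the tiling as the image of Stein's tiling under a cleverly chosen invertible linear map. Specifically, I will use that the cyclic matrix $C$ satisfies $C^n=I$, so
$$I-(\alpha C)^n=(I-\alpha C)\left(I+\alpha C+\alpha^2 C^2+\dots+\alpha^{n-1}C^{n-1}\right),$$
and since $0<\alpha<1$ this gives an explicit inverse
$$M:=(I-\alpha C)^{-1}=\frac{1}{1-\alpha^n}\sum_{k=0}^{n-1}\alpha^k C^k.$$
Because $M$ is a polynomial in $C$, it commutes with $C$.

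My first step is to identify what $M$ does on the relevant data. A direct computation using the explicit formula above shows that $Me_1=w(\alpha)$; combined with $M C=CM$, this yields $MC^ke_1=C^kw(\alpha)$ for every $k$. Since $\mathcal P[e_1]$ is spanned by $\{e_1,Ce_1,\dots,C^{n-1}e_1\}$ and a linear map sends the parallelotope spanned by a set of vectors to the parallelotope spanned by their images, we obtain $M\mathcal P[e_1]=\mathcal P[w(\alpha)]$.

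Next I need to verify that $M$ also carries the notch of $\mathcal N[e_1,\alpha]$ onto the notch of $\mathcal N[w(\alpha),\alpha]$. Since $C\vec 1=\vec 1$ (because $C$ permutes the coordinates), $\vec 1$ is an eigenvector of $M$ with eigenvalue $\frac{1}{1-\alpha^n}\sum_{k=0}^{n-1}\alpha^k=\frac{1}{1-\alpha}$. On the other hand $\sum_i w(\alpha)_i=\frac{1}{1-\alpha^n}\sum_{k=0}^{n-1}\alpha^k=\frac{1}{1-\alpha}$, so $M\vec 1=(\sum_i w(\alpha)_i)\vec 1$. Consequently the vertex of $\mathcal P[e_1]$ on the diagonal maps to the corresponding diagonal vertex of $\mathcal P[w(\alpha)]$, and since $M(\alpha\mathcal P[e_1])=\alpha\mathcal P[w(\alpha)]$, we conclude $M\mathcal N[e_1,\alpha]=\mathcal N[w(\alpha),\alpha]$.

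Finally, by Lemma \ref{notchedcube}, the translates of $\mathcal N[e_1,\alpha]$ by the lattice $L=(I-\alpha C)\mathbb Z^n$ tile $\mathbb R^n$. Applying the invertible linear map $M$ to this tiling preserves the tiling property, and $ML=M(I-\alpha C)\mathbb Z^n=\mathbb Z^n$. Therefore the translates of $M\mathcal N[e_1,\alpha]=\mathcal N[w(\alpha),\alpha]$ by $\mathbb Z^n$ tile $\mathbb R^n$, as desired. The only real content here is recognizing that $w(\alpha)=(I-\alpha C)^{-1}e_1$; once that is noticed, each step is a short verification, and I expect no serious obstacle.
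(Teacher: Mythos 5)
Your proposal is correct and follows essentially the same route as the paper: both identify the key map as $(I-\alpha C)^{-1}=\frac{1}{1-\alpha^n}\sum_{k=0}^{n-1}\alpha^kC^k$, observe that it sends $e_1$ to $w(\alpha)$ and the Stein lattice $L$ to $\mathbb Z^n$, and then push Stein's tiling forward, checking that the notch maps to the notch via the eigenvector $\vec 1$. Your write-up is if anything slightly more explicit about the commutation with $C$ and the diagonal vertex computation, but there is no substantive difference.
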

 \begin{proof}
By Lemma \ref{notchedcube} we know that  $\mathcal {N}[e_1,\alpha]$ tiles $\mathbb R^n$ under translation by $L,$ the lattice spanned by the columns of $I-\alpha C$.  If we define $A$ to be the linear transformation that maps $L$ to $\mathbb Z^n$, we thus have that that $A(\mathcal {N}[e_1, \alpha])$ tiles $\mathbb R^n$ under translation by $\mathbb Z^n$.  Note that 
\begin{eqnarray*}
A&=&(I-\alpha C)^{-1}\\
&=&\sum_{i=0}^{\infty}(\alpha C)^i\\
&=&\frac1{1-\alpha^n}\sum_{i=0}^{n-1}(\alpha C)^i,
\end{eqnarray*} 
so that  $A(e_j)
=C^{j -1}w(\alpha)$.   Thus, 
\begin{eqnarray*}
A\left(\mathcal N[e_1, \alpha]\right)&=&A\left(\mathcal P[e_1]\setminus\;\tau_{(1-\alpha)\vec 1}\;\alpha \mathcal P[e_1]\right)\\
&=&\mathcal P[A(e_1)]\setminus\;\tau_{(1-\alpha)A\vec 1}\;\alpha\mathcal P[A(e_1)]\\
&=&\mathcal P[w(\alpha)]\setminus\tau_{\vec 1}\;\alpha \mathcal P[w(\alpha)]\\
&=& \mathcal {N}[w(\alpha), \alpha]
\end{eqnarray*}
 \end{proof}

\par To be a wavelet set, a notched parallelotope would have to also tile under dilation.  For a scalar dilation $d$, this is clearly impossible, since a notched parallelotope is defined to have an extreme point at the origin.  Thus, we consider instead the translated notched parallelotope $\tau_{t}\mathcal N\left[w(\alpha),\alpha\right]$, for $t\in\mathbb R^n$.   For such a set to tile under dilation by $d$ would require that the dilated outer parallelotope, $\frac1d \tau_{t}\;\mathcal P[w(\alpha)]$, fit perfectly into the notch, which is of the form $\tau_{t+\vec 1}\;\alpha \mathcal P[w(\alpha)]$.  First note that this would force the scale of the dilated outer parallelotope to match the scale of the notch, so that $\alpha=|\frac 1d|$.  For positive $d>1$, this perfect fit also would require that $\frac1d t=t+\vec 1$ so that $t=-\frac d{d-1} \vec1$.  However, the outer parallelotope $\tau_{t}\;\mathcal P[w(\frac 1d)]$ has one of its extreme points at $t+\sum_{j=1}^{n}[w(\frac 1d)]_j=t+\frac{d}{d-1}\vec 1$.  Thus, using the required value for the translation $t$ would again cause the outer parallelotope to have an extreme point at the origin. Hence a set of the form $\tau_{t}\mathcal N\left[w(\alpha),\alpha\right]$ cannot tile by a positive scalar dilation.  However, for negative scalar dilations, a wavelet set that is just a notched parallelotope is possible, as the following theorem shows.  The examples produced by Theorem \ref{negdilate} generalize the wavelet sets for negative scalar dilations in $\mathbb R^2$ found in \cite{gy} and \cite{m2}.  

 \begin{theorem}\label{negdilate}
For $d\in\mathbb R$, $d>1$, let $w(\frac 1d)=\frac{1}{d^{n}-1}(d^{n},d^{n-1},\dots,d)$, and  $t=-\frac{d^2}{d^2-1}$.  Then 
$$\mathcal W=\tau_{t\vec 1}\;\mathcal N\left[w\left(\frac 1d\right),\frac 1{d}\right]$$
 is a wavelet set for dilation by $-d$ in $\mathbb R^n$.
\end{theorem}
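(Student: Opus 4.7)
The plan is to verify the two tiling conditions that characterize a wavelet set: that $\mathcal W$ tiles $\mathbb R^n$ under $\mathbb Z^n$-translation and that the dilates $\{(-d)^j \mathcal W : j \in \mathbb Z\}$ partition $\mathbb R^n$ up to measure zero. The translation tiling is immediate from Lemma \ref{notchedpar}, which already gives a $\mathbb Z^n$-tiling by copies of $\mathcal N[w(1/d), 1/d]$; translating every tile by the common vector $t\vec 1$ preserves a lattice tiling. The substance of the proof lies in the dilation condition.

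Abbreviate $P = \mathcal P[w(1/d)]$, and write $\mathcal W = P_0 \setminus Q_0$ with outer parallelotope $P_0 = \tau_{t\vec 1}P$ and notch $Q_0 = \tau_{(t+1)\vec 1}((1/d)P)$. The crucial identity, on which the whole theorem hinges, is
$$(-d)^{-1}P_0 = Q_0.$$
I would prove it by using the elementary fact that every parallelotope $\mathcal P[v]$ is invariant under reflection through its centroid, which yields $-\mathcal P[v] = \tau_{-(\sum_i v_i)\vec 1}\mathcal P[v]$. Applying this to $v = w(1/d)$ rewrites $-(1/d)P_0$ in the form $\tau_{c\vec 1}((1/d)P)$ for an explicit scalar $c$, and a direct computation using $\sum_i w(1/d)_i = d/(d-1)$ together with $t = -d^2/(d^2-1)$ shows $c = t+1$, so that the right-hand side is exactly $Q_0$.

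With this identity in hand, define $P_k = (-d)^{-k}P_0$ for $k \in \mathbb Z$, so that $P_1 = Q_0$. Since $Q_0 \subsetneq P_0$ (the notch lies strictly inside the outer parallelotope) and $(-d)^{-1}$ is linear, a short induction produces a strictly decreasing chain $\cdots \supsetneq P_{-1} \supsetneq P_0 \supsetneq P_1 \supsetneq \cdots$, and applying $(-d)^j$ to $\mathcal W = P_0 \setminus P_1$ gives $(-d)^j\mathcal W = P_{-j} \setminus P_{-j+1}$. The family $\{(-d)^j\mathcal W\}_{j \in \mathbb Z}$ is thus a pairwise disjoint telescoping decomposition of $\bigcup_k P_k \setminus \bigcap_k P_k$.

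To finish I would check that $\bigcup_k P_k = \mathbb R^n$ and $\bigcap_k P_k = \{0\}$. Both reduce to the fact that the origin lies in the interior of $P_0$, which is visible because the two $\vec 1$-axis vertices of $P_0$ are $t\vec 1$ and $(t + d/(d-1))\vec 1 = d/(d^2-1)\vec 1$, and these straddle the origin. Combined with $\mathrm{diam}(P_k) = d^{-k}\mathrm{diam}(P_0)$, this forces $\bigcap_k P_k = \{0\}$ and $\bigcup_k P_k = \mathbb R^n$, completing the dilation tiling. The only real obstacle I anticipate is the identity $(-d)^{-1}P_0 = Q_0$: this is exactly what forces the particular choices of $t$ and $w(1/d)$ in the theorem; everything else is telescoping and limit bookkeeping.
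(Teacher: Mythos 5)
Your proof is correct and takes essentially the same route as the paper's: both hinge on the single identity $-\frac 1d\,\tau_{t\vec 1}\mathcal P[w(\frac 1d)]=\tau_{(t+1)\vec 1}\,\frac 1d\mathcal P[w(\frac 1d)]$ (the dilated outer parallelotope exactly fills the notch), verified by the same arithmetic with $t=-\frac{d^2}{d^2-1}$ and $\sum_i [w(\frac 1d)]_i=\frac d{d-1}$. The only cosmetic difference is packaging: the paper pairs $\mathcal W$ with $-\frac 1d\mathcal W$ into the annulus $\tau_{t\vec 1}\mathcal P\setminus\frac 1{d^2}\tau_{t\vec 1}\mathcal P$, which tiles under dilation by $d^2$, while you telescope the nested chain $(-d)^{-k}P_0$ directly.
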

\begin{proof}
We know from Lemma \ref{notchedpar} that the notched parallelotope $N\left[w(\frac 1d),\frac 1{d}\right]$ tiles $\mathbb R^n$ under translation by $\mathbb Z^n$, and thus that its translate $\mathcal W$ does as well.  It remains to show that $\mathcal W$ tiles under dilation by $-d$. 
 
The proposed wavelet set $\mathcal W=\tau_{t\vec 1}\;\mathcal N\left[w\left(\frac 1d\right),\frac 1{d}\right]$ has its vertices on the line determined by $\vec 1$ at $t\vec 1$ and $(t+1)\vec 1$, while its outer parallelotope $\tau_{t\vec 1}\mathcal P[w\left(\frac 1d\right)]$ has its vertices at $t\vec 1$ and $(t+\frac d{d-1})\vec 1$. (See Figure \ref {neg}a.)  If we consider now the dilate by $-\frac 1d$ of these two polytopes, we see that $-\frac1d \mathcal W$ has its vertices  on the line determined by $\vec 1$ at $\frac{-t}d\vec 1$ and $\frac{-(t+1)}d \vec 1$, while $-\frac1d\tau_{t\vec 1}\mathcal P[w\left(\frac 1d\right)] $ has its vertices at $\frac{-t}d\vec 1$ and $(\frac{-t}d+\frac{-1}{d-1})\vec 1$.  (See Figure \ref{neg}b.)  
\begin{figure}[h]
\label{neg}
\centering $\begin{array}{ll}
   \setlength{\unitlength}{200bp}
\begin{picture}(1,1)(0,0)
\put(0,0){\includegraphics[width=\unitlength]{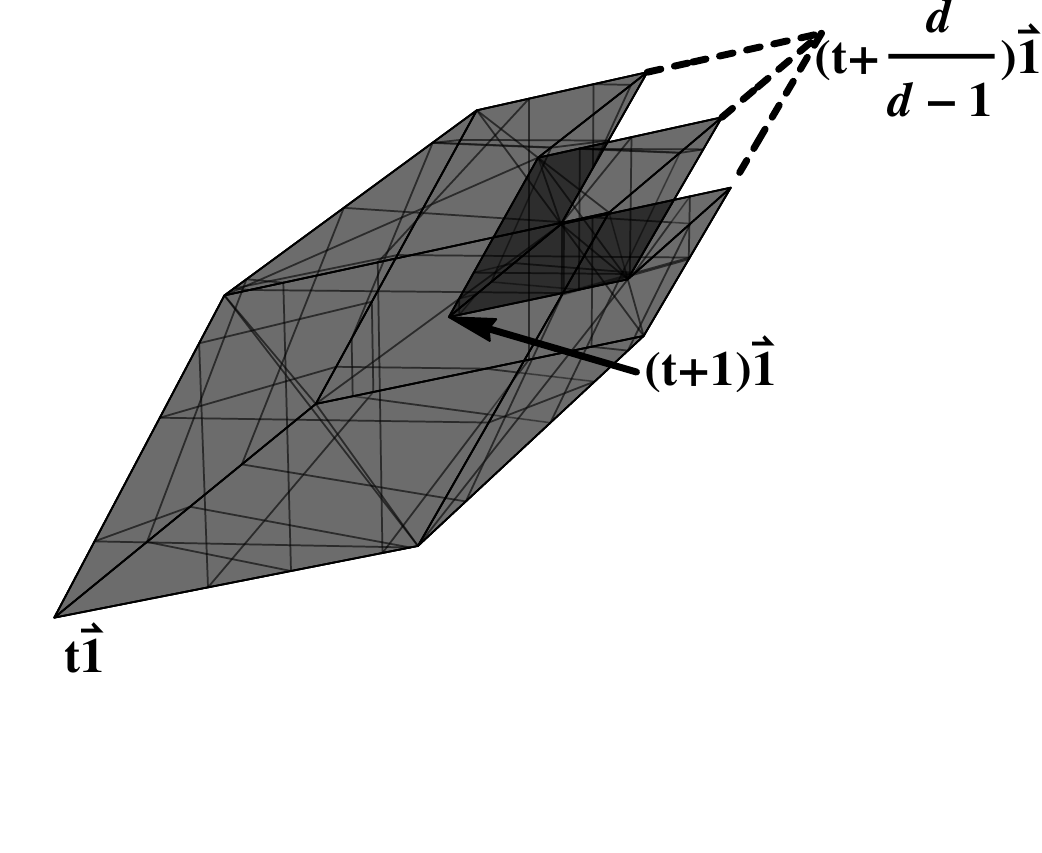}}
\end{picture}
&
\setlength{\unitlength}{200bp}
\begin{picture}(1.0,1)(0,0)
\put(0,0){\includegraphics[width=\unitlength]{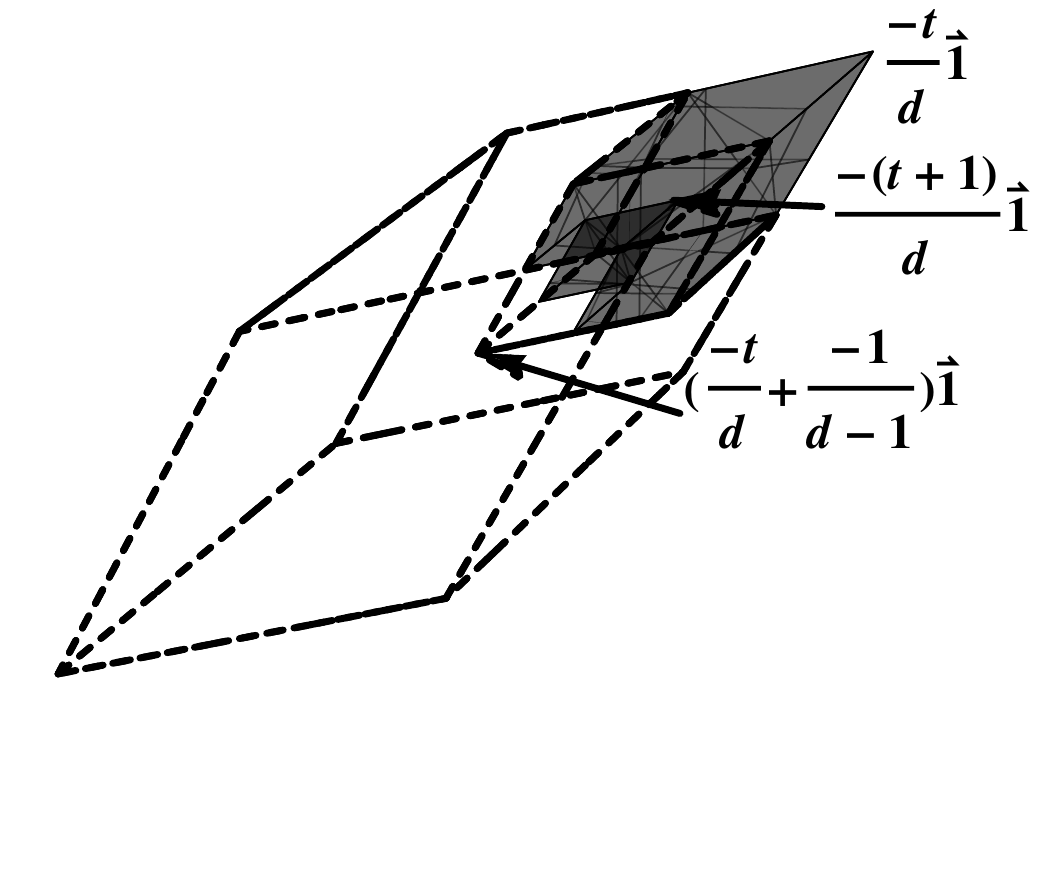}}
\end{picture}\\
\hskip.5in\mbox{(a)}\quad\mathcal W & \hskip.5in \mbox{(b)}\quad -\frac 1d \mathcal W
      \end{array}$
\caption{}       
\end{figure}
Using the value $t=\frac{-d^2}{d^2-1}$ given in the statement of the Theorem, we see that $t+\frac d{d-1}=-\frac td$ and $t+1=\frac{-t}d+\frac{-1}{d-1}$, so that the dilation by $-\frac 1d$ of the outer parallelotope used to form $\mathcal W$ exactly fits into the notch of $\mathcal W$.  Thus,
\begin{eqnarray*}
\mathcal W\sqcup\frac{-1}d\mathcal W&=&\tau_{t\vec 1}\mathcal P\left[w\left(\frac 1d\right)\right]\setminus\left(\tau_{(t+1)\vec 1}\;\frac{1}{d^2}\mathcal P\left[w\left(\frac 1d\right)\right]\right)\\
&=&\tau_{t\vec 1}\mathcal P\left[w\left(\frac 1d\right)\right]\setminus\frac{1}{d^2}\left(\tau_{t\vec 1}\mathcal P\left[w\left(\frac 1d\right)\right]\right),
\end{eqnarray*}
which tiles under dilation by $d^2$.  Therefore, $\mathcal W$ tiles under dilation by $-d$.  
\end{proof} 
Figure \ref{neg}a shows the general shape of a wavelet set $\mathcal W$ for dilation by $-d$ in 3 dimensions.  The size of the notch shown is for $d=2$; in general the size of the notch will be $\frac1d$ times the size of the whole.  For dilation by $-d$, the lower left hand vertex of $\mathcal W$ will be at  $-\frac{d^2}{d^2-1}\vec 1$, the inside corner of the notch at $\frac{-1}{d^2-1}\vec 1$,  and the outer corner of the notch at $\frac{d}{d^2-1}\vec 1.$
\section{Notched parallelotopes with satellites}
Even though we have seen that a translate of a notched parallelotope $\tau_t\;\mathcal N[w(\alpha),\alpha]$ cannot itself tile under dilation by a positive scalar, we will make a simple alteration to such a set that retains the property of tiling under translation, and makes the set tile under dilation as well.   We use the idea behind the wavelet set construction technique in \cite{bl}.  That is, we eliminate the overlap between our proposed wavelet set and its dilate, by translating the dilate out by an integer vector, creating a satellite.   In order to avoid the iterated process required in \cite{bl}, we translate out a little bigger piece than the dilate of  $\tau_t\;\mathcal N[w(\alpha),\alpha]$; that is, we translate out the dilate of the whole outer parallelotope $\tau_t\;\mathcal P[w(\alpha)]$.  We choose the translation amount such that a dilate of the satellite exactly fills the notch.  The details of this construction are carried out in the following theorem.  
 
 \begin{theorem}\label{main}
For $d\in\mathbb R$, $d\geq 2$, let $w(\frac 1{d^2})=\frac{1}{d^{2n}-1}(d^{2n},d^{2n-2},\dots,d^2)$.  Suppose $k\in\mathbb Z$ satisfies $1\leq k<d$, and let $t=\frac{d(k-d)}{d^2-1}.$  Then
$$\mathcal W=\left(\left( \tau_{t\vec 1}\;\mathcal N\left[w\left(\frac 1{d^2}\right), \frac 1{d^2}\right]\right)\;\setminus\;\left(\frac 1d\tau_{t\vec 1}\; \mathcal P\left[w\left(\frac1{d^2}\right)\right]\right)\right)\;\bigcup\;\tau_{k\vec 1}\left(\frac1d\tau_{t\vec 1}\;\mathcal P\left[w\left(\frac1{d^2}\right)\right]\right)$$  is a wavelet set for dilation by $d$ in $\mathbb R^n.$
\end{theorem}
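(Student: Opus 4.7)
The plan is to verify the two defining tiling conditions: translation by $\mathbb Z^n$ and dilation by $d$. For bookkeeping, introduce the shorthand $N_{\mathrm{out}}=\tau_{t\vec 1}\mathcal P[w(1/d^2)]$ for the outer parallelotope, $N_{\mathrm{notch}}=\tau_{(t+1)\vec 1}\tfrac{1}{d^2}\mathcal P[w(1/d^2)]$ for the notch (so that $\tau_{t\vec 1}\mathcal N[w(1/d^2),1/d^2]=N_{\mathrm{out}}\setminus N_{\mathrm{notch}}$), and $Q=\tfrac{1}{d}N_{\mathrm{out}}$ for the piece being translated out. Then $\mathcal W=M\cup S$ where $M=(N_{\mathrm{out}}\setminus N_{\mathrm{notch}})\setminus Q$ is the main body and $S=\tau_{k\vec 1}Q$ is the satellite.

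For translation tiling, I first check that $Q\subset N_{\mathrm{out}}\setminus N_{\mathrm{notch}}$ modulo measure zero. Since every set here is a translated scalar multiple of $\mathcal P[w(1/d^2)]$, this reduces to comparisons of coefficients in the basis $\{C^i w(1/d^2)\}$, and the hypotheses $1\le k<d$, $d\ge 2$ are exactly what make the relevant inequalities hold. Once this containment is in hand, $\mathcal W$ is produced from the notched parallelotope---which tiles $\mathbb R^n$ under $\mathbb Z^n$ by Lemma \ref{notchedpar}---by removing $Q$ and reinserting it at $Q+k\vec 1$; because $k\vec 1\in\mathbb Z^n$, the $\mathbb Z^n$-tiling property is preserved.

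The core of the dilation proof is the geometric identity
\[
\tfrac{1}{d}S=N_{\mathrm{notch}},\qquad\text{with companion}\qquad dQ=N_{\mathrm{out}},
\]
both of which boil down to the single equation $\tfrac{k}{d}+\tfrac{t}{d^2}=t+1$---precisely the definition of $t=\tfrac{d(k-d)}{d^2-1}$. Combined with the disjoint decomposition $N_{\mathrm{out}}=M\sqcup N_{\mathrm{notch}}\sqcup Q$, a telescoping induction yields
\[
\bigcup_{j=0}^{J}\tfrac{1}{d^j}\mathcal W=\Bigl(N_{\mathrm{out}}\setminus\tfrac{1}{d^J}N_{\mathrm{notch}}\setminus\tfrac{1}{d^J}Q\Bigr)\cup S,
\]
which tends to $N_{\mathrm{out}}\cup S$ (a.e.) as $J\to\infty$. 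Driven by $dQ=N_{\mathrm{out}}$, the symmetric induction gives
\[
\bigcup_{j=0}^{J}d^{\,j}\mathcal W=\bigl(d^J N_{\mathrm{out}}\setminus N_{\mathrm{notch}}\setminus Q\bigr)\cup d^{J+1}N_{\mathrm{notch}}.
\]
Since $0$ lies in the interior of $N_{\mathrm{out}}$, the sets $d^J N_{\mathrm{out}}$ exhaust $\mathbb R^n$ as $J\to\infty$, while $d^{J+1}N_{\mathrm{notch}}$ is pushed off to infinity. Combining the two halves and using $N_{\mathrm{notch}}\cup Q\subset N_{\mathrm{out}}$ gives $\bigcup_{j\in\mathbb Z}d^{\,j}\mathcal W=\mathbb R^n$ a.e. Pairwise disjointness follows from the annular picture the construction sets up: each $d^jM$ lies in $d^jN_{\mathrm{out}}\setminus d^{j-1}N_{\mathrm{out}}$ (because $d^jQ=d^{j-1}N_{\mathrm{out}}$ is removed), and each $d^jS=d^{j+1}N_{\mathrm{notch}}$ sits in the notch of the next annulus up.

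The hardest step I anticipate is managing the telescoping induction cleanly---in particular verifying that the a priori overlaps $Q\cap N_{\mathrm{notch}}$, $N_{\mathrm{out}}\cap dN_{\mathrm{notch}}$, and $N_{\mathrm{out}}\cap\tau_{k\vec 1}Q$ all collapse to measure zero under the constraints $1\le k<d$ and $d\ge 2$. Once those boundary incidences are pinned down, the identities $\tfrac{1}{d}S=N_{\mathrm{notch}}$ and $dQ=N_{\mathrm{out}}$ essentially carry the rest of the argument.
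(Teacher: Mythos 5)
Your proposal is correct and follows essentially the same route as the paper: the same containment claim $Q\subset N_{\mathrm{out}}\setminus N_{\mathrm{notch}}$ (reducing to the inequalities forced by $k\ge 1$ and $d\ge 2$), the same use of Lemma \ref{notchedpar} for translation tiling, and the same key identity that the satellite is exactly the $d$-dilate of the notch, which is precisely the definition of $t$. The only difference is presentational: where the paper observes that $N_{\mathrm{out}}\setminus Q$ tiles under dilation by $d$ and that swapping the notch for its $d$-dilate preserves this, you verify the same fact by an explicit telescoping union.
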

\begin{proof}

We claim that $\frac1d\tau_{t\vec 1}\;  \mathcal P\left[w\left(\frac1{d^2}\right)\right]\subset\tau_{t\vec 1 }\;\mathcal N\left[w(\frac1{d^2}), \frac 1{d^2}\right]$. 
To see this, first note that
$t<0$ and $-t<\frac{d^2}{d^2-1}=\sum_{i=1}^{n}[w(\frac 1{d^2})]_i,$ so that 
 $ 0\in\frac 1d\tau_{t\vec 1}\;\mathcal P\left[w\left(\frac1{d^2}\right)\right]\subset\tau_{t\vec 1}\; \mathcal P\left[w\left(\frac1{d^2}\right)\right]$.
 Thus, to establish the claim, it will suffice to show that the vertex of the notch that is closest to the origin, namely $(t+(1-\frac 1{d^2})\sum [w(\frac 1{d^2})]_i)\vec 1$, lies outside of   $\frac1d\tau_{ t\vec 1}\;\mathcal P\left[w\left(\frac1{d^2}\right)\right]$.  That is, we must show that $t+1\geq\frac 1d\left(t+\frac{d^2}{d^2-1}\right)$.  Substituting $t=\frac{d(k-d)}{d^2-1},$ we see that this is equivalent to $k\geq\frac 1{d-1},$ which follows from the given conditions $k\geq 1$ and $d\geq 2$.
 
Lemma \ref{notchedpar} impies that $\mathcal N\left[w(\frac1{d^2}),\frac 1{d^2}\right] $ tiles under translation by the integer lattice, and thus that $\tau_{t\vec 1 }\;\mathcal N\left[w(\frac 1{d^2}),\frac 1{d^2}\right]$  does as well.    
By the claim, we have that $\mathcal W$ is formed by removing $\frac 1d\tau_{t\vec 1}\mathcal P\left[w\left(\frac1{d^2}\right)\right]$ from inside $\tau_{t\vec 1 }\;\mathcal N\left[w(\frac 1{d^2}),\frac 1{d^2}\right]$, and shifting it by a vector of the integer lattice.   Thus $\mathcal W$ tiles $\mathbb R^n$ under translation by $\mathbb Z^n$.
 
To establish tiling under dilation by $d$, we first show that $\tau_{ t \vec 1}\;\mathcal N\left[w(\frac 1{d^2}),\frac 1{d^2}\right]$ is disjoint from
$\tau_{k\vec 1}\left(\frac1d\tau_{t\vec 1}\;\mathcal P\left[w\left(\frac1{d^2}\right)\right]\right)$.   That is, we must show that $\frac td+k>t+1$, which follows from the definition of $t$ together with the condition $k\geq 1$.  
Now, note that $\tau_{ t\vec 1}\;\mathcal P\left[w\left(\frac1{d^2}\right)\right]\setminus\frac1d\tau_{t\vec 1} \;\mathcal P\left[w\left(\frac1{d^2}\right)\right]$ tiles $\mathbb R^n$ under dilation by $d$.  The definition of $t$ together with the disjointness of the pieces of $\mathcal W$ shows that $\mathcal W$ is formed from this set by dilating the notch $\tau_{(t+1)\vec 1}\left(\frac1{d^2} \mathcal P\left[w\left(\frac1{d^2}\right)\right]\right)$ by $d$.  Thus,  $\mathcal W$ also tiles $\mathbb R^n$ under dilation by $d$.   

See Figure 9.7 of \cite{m1} for illustrations of these tilings under both translation and dilation in the case $n=2$. 
\end{proof}

\begin{remark}\label{var}The wavelet sets produced by Theorem \ref{main} are a natural generalization of the 2 dimensional simple wavelet sets produced for scalar dilations in \cite{m1}.  We can also alter these examples to produce natural generalizations of the 2-dimensional example for dilation by 2 that appears in \cite{gy}.  Note that if $\mathcal W$ is a wavelet set for dilation by the scalar $d$, then so is $S(\mathcal W)$ for any integer matrix $S$ of determinant $\pm 1$.  If we take $S$ to be the $n\times n$ matrix that has 1's on the diagonal, -1's on the subdiagonal and 0's elsewhere, then $S(\mathcal W)$ is a simple wavelet set for dilation by $d$ in dimension $n$ that is centered on the $x_1$ axis rather than the line $x_1=x_2=\dots x_n$.  Other variations are easily produced using other choices for the matrix $S$.   
\end{remark}

Figure \ref{3d} shows one of the wavelet sets produced by Theorem \ref{main} for dilation by 2 in $\mathbb R^3$, as well as the variation described in Remark \ref{var}.

\begin{figure}[h]
\label{3d}
\centering $\begin{array}{ll}
   \setlength{\unitlength}{150bp}
\begin{picture}(1,1)(.3,0)
\put(0,0){\includegraphics[width=\unitlength]{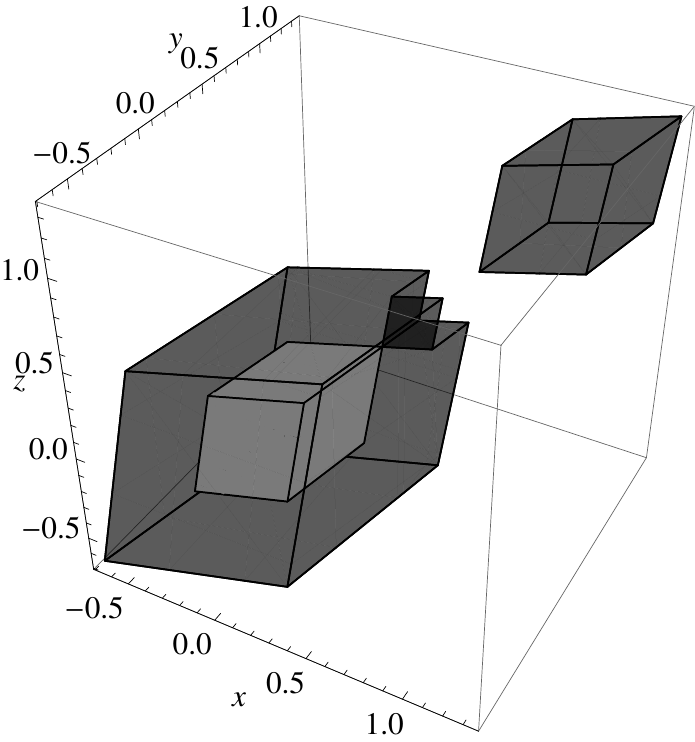}}
\end{picture}
&
\setlength{\unitlength}{150bp}
\begin{picture}(1.0,1)(-.2,0)
\put(0,0){\includegraphics[width=\unitlength]{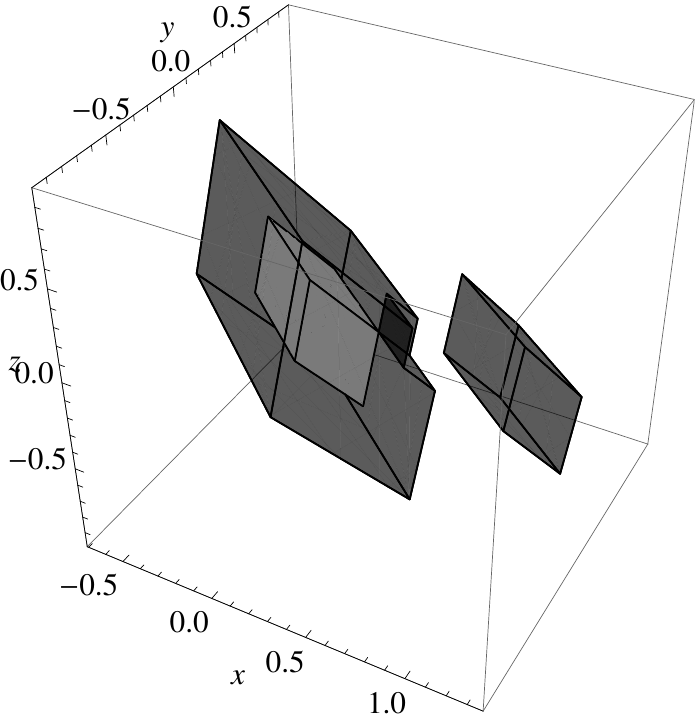}}
\end{picture}
      \end{array}$
      \caption{3-dimensional wavelet sets for dilation by 2.}
\end{figure}

The final theorem uses the same technique as Theorem \ref{main} to produce simple wavelet sets for dilation by an expansive matrix $A$ that has an integer power $A^p$ equal to a scalar multiple $d$ of the identity.  In this case, the notched parallelotope $\tau_t\mathcal N[w(\frac 1d),\frac 1d] $ fails to tile under dilation by $A^*$ because it is formed from the outer parallelotope $\tau_tP[w(\frac1d)]$ by removing its dilate by $\frac 1d$ instead of its dilate by the matrix $A^{*-1}$.  We would like to remedy the resulting overlap of $\tau_t\mathcal N[w(\frac 1d),\frac 1d] $ and all of its negative dilates $A^{*-j}\tau_t\mathcal N[w(\frac 1d),\frac 1d] $, by translating $A^{*-1}\tau_t\mathcal P[w(\frac 1d))]$ out by an appropriately chosen integer vector, to form a satellite that will be clear of $\tau_t\mathcal N[w(\frac 1d),\frac 1d] $ and will perfectly dilate into the notch.  This is possible only if $A^{*-1}\tau_t\mathcal P[w(\frac 1{d^q})] $ is completely contained in $\tau_t\mathcal N[w(\frac 1{d^q}),\frac 1{d^q}] $.  (For an example where this containment does not hold, see Figure 5(a).)  To overcome this difficulty, we put a restriction on $A$, and sometimes replace $\frac 1d$ with a higher power $\frac 1{d^q}$ .

 \begin{theorem}\label{matrix}
Let $A$ be an $n\times n$ integer matrix such that $A^p=d(I[n])$, where $d>1$, $p\in\mathbb Z$, $p>1$, and $I[n]$ is the $n\times n$ identity matrix.  Suppose further that all of the singular values of $A$ are greater than $\sqrt n$.  Let $w(\frac 1{d^q})=\frac{1}{d^{qn}-1}(d^{qn},d^{qn-q},\dots,d^q)$.   Let $k$ be the closest vector in $\mathbb Z^n$ to $A^{*-1}\left(\frac{d^q}2\vec 1\right)$, and let $t=\frac{1}{d^q-1}\left(A^{*}k-d^q\vec 1\right).$   Then for $q$ sufficiently large,  $A^{*-1}\left(\tau_t\mathcal P\left[w(\frac 1{d^q})\right]\right)\subset\tau_t\mathcal N\left[w(\frac 1{d^q}),\frac 1{d^q}\right].$  For such $q,$
$$\mathcal W=\left(\left( \tau_{t}\;\mathcal N\left[w\left(\frac 1{d^q}\right), \frac 1{d^q}\right]\right)\;\setminus\;\left(A^{*-1}\tau_{t}\; \mathcal P\left[w\left(\frac 1{d^q}\right)\right]\right)\right)\;\bigcup\;\left(\tau_{k}\left(A^{*-1}\tau_{t}\;\mathcal P\left[w\left(\frac 1{d^q}\right)\right]\right)\right)$$  is a wavelet set for dilation by $A$ in $\mathbb R^n.$
\end{theorem}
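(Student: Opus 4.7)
The plan is to follow the template of Theorem \ref{main}, with the scalar contraction $1/d$ replaced by the matrix contraction $A^{*-1}$ and with $1/d^q$ taking the role that $1/d^2$ played there. The argument breaks into three tasks: (i) the containment $A^{*-1}\tau_t\mathcal{P}[w(1/d^q)]\subset\tau_t\mathcal{N}[w(1/d^q),1/d^q]$ for $q$ sufficiently large; (ii) tiling of $\mathcal W$ by $\mathbb Z^n$; and (iii) tiling of $\mathcal W$ by $A^*$.

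For (i) I would exploit the asymptotics as $q\to\infty$. The vector $w(1/d^q)$ tends coordinatewise to $e_1$, so $\mathcal{P}[w(1/d^q)]$ converges in Hausdorff distance to the unit cube $[0,1]^n$. Because $k$ is the nearest integer point to $A^{*-1}(\tfrac{d^q}{2}\vec 1)$, its rounding error has norm at most $\tfrac{\sqrt n}{2}$, so $\|A^*k-\tfrac{d^q}{2}\vec 1\|$ stays bounded as $q$ grows; dividing by $d^q-1$ forces $t\to -\tfrac12\vec 1$. Thus $\tau_t\mathcal{P}[w(1/d^q)]$ converges to the centered cube $[-\tfrac12,\tfrac12]^n$, whose Euclidean circumradius is $\tfrac{\sqrt n}{2}$. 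The singular-value hypothesis gives operator norm $\|A^{*-1}\|<1/\sqrt n$, so $A^{*-1}$ sends every point of this cube into the open ball of radius $\tfrac12$ about the origin; that ball lies strictly inside the cube and is separated from the far vertex $\tfrac12\vec 1$, which sits at distance $\tfrac{\sqrt n}{2}>\tfrac12$ since $n\ge 2$. The notch around that vertex has diameter of order $d^{-q}$, so the containment holds for all $q$ large enough; the same asymptotics give $0\in\tau_t\mathcal{P}[w(1/d^q)]$.

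For (ii), Lemma \ref{notchedpar} supplies the $\mathbb Z^n$-tiling of $\tau_t\mathcal N[w(1/d^q),1/d^q]$, and the containment just proved shows that $\mathcal W$ is obtained from this notched parallelotope by excising $A^{*-1}\tau_t\mathcal{P}[w(1/d^q)]$ and reinserting it at the integer translate $k$, which preserves the tiling. For (iii), the annulus $B=\tau_t\mathcal{P}[w(1/d^q)]\setminus A^{*-1}\tau_t\mathcal{P}[w(1/d^q)]$ tiles $\mathbb R^n$ under $A^*$: containment together with $0\in\tau_t\mathcal{P}[w(1/d^q)]$ makes $\{A^{*j}\tau_t\mathcal{P}[w(1/d^q)]\}_{j\in\mathbb Z}$ a nested family shrinking to $\{0\}$ as $j\to-\infty$ and exhausting $\mathbb R^n$ as $j\to\infty$ (by expansivity of $A^*$), so its consecutive differences $A^{*j}B$ partition $\mathbb R^n$. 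Writing $N=\tau_{t+\vec 1}\bigl(\tfrac{1}{d^q}\mathcal{P}[w(1/d^q)]\bigr)$ for the notch and $S=\tau_k A^{*-1}\tau_t\mathcal{P}[w(1/d^q)]$ for the satellite, the crux is the identity $S=A^{*(pq-1)}N$. Using $A^{*pq}=d^qI$, so $A^{*(pq-1)}=d^qA^{*-1}$, this identity reduces to matching translation vectors and is precisely equivalent to the defining equation $A^*k=(d^q-1)t+d^q\vec 1$. Since $pq-1\ge 1$, $S$ lives in the disjoint annulus $A^{*(pq-1)}B$, and the standard swap argument then yields $\bigsqcup_j A^{*j}\mathcal W=\bigsqcup_j A^{*j}B=\mathbb R^n$ because the $A^*$-orbits of $S$ and of $N$ coincide set-wise.

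I expect the containment step to be the main obstacle, because the singular-value bound is tight: $A^{*-1}$ sends the circumscribing ball of the limiting cube to a ball of radius exactly $\tfrac12$, and the argument closes only because $n\ge 2$ leaves positive slack between $\tfrac{\sqrt n}{2}$ and $\tfrac12$. The quantitative deviations of $w(1/d^q)$ from $e_1$ and of $t$ from $-\tfrac12\vec 1$ are of order $d^{-q}$, and $q$ must be chosen large enough for that slack to absorb them. By contrast, the algebraic identity $S=A^{*(pq-1)}N$ is a direct computation that is already encoded in the definition of $t$.
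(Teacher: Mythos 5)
Your proposal is correct and follows essentially the same route as the paper: the containment via the limit $\tau_t\mathcal P[w(1/d^q)]\to[-\tfrac12,\tfrac12]^n$ combined with the singular-value bound (your circumradius/operator-norm phrasing is just a cleaner version of the paper's ``longest vector close to $\sqrt n$ times the shortest''), the $\mathbb Z^n$-tiling from Lemma \ref{notchedpar} plus the integer shift, and the dilation tiling from the identity $S=A^{*(pq-1)}N$, which is exactly the paper's computation $A^{*(-pq+1)}S=N$ encoded in the definition of $t$. Your observation that $S$ lies in the annulus $A^{*(pq-1)}B$ even gives the satellite's disjointness from the main body slightly more explicitly than the paper does.
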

\begin{proof}
By Lemma \ref{notchedpar}, $\mathcal N\left[w(\frac 1{d^q}), \frac 1{d^q}\right]$, tiles under translation by $\mathbb Z^n$, and thus its translate by $t$ does as well.  Thus, $\mathcal W$ will also tile by translation as long as the set $A^{*-1}\tau_{t}\; \mathcal P[w(\frac 1{d^q})]$, which is translated out by the integer vector $k$, is a subset of $\tau_t\mathcal N\left[w(\frac 1{d^q}), \frac 1{d^q}\right]$, and moves to a position disjoint from $\tau_t\mathcal N\left[w(\frac 1{d^q}), \frac 1{d^q}\right]$.  As $q\rightarrow \infty$, the vector $w(\frac1{d^q})\rightarrow e_1$ and $t\rightarrow- \frac{1}2\vec 1$, so that the set $\tau_t\mathcal P\left[w(\frac 1{d^q})\right]$ approaches the unit $n$-cube centered at the origin.  Thus, by taking $q$ sufficiently large, we can make the longest vector in $\tau_t\mathcal P\left[w(\frac 1{d^q})\right]$ to be arbitrarily close to $\sqrt n$ times as long as the shortest vector.  Then, since the singular values of $A$ are greater than $\sqrt n$, we will have $A^{*-1}\left(\tau_t\mathcal P\left[w(\frac 1{d^q})\right]\right)\subset\tau_t \mathcal P\left[w(\frac 1{d^q})\right]$.  The size of the notch also shrinks to 0 as $q\rightarrow \infty$, so that by taking $q$ larger if necessary, we will also have $A^{*-1}\left(\tau_t\mathcal P\left[w(\frac 1{d^q})\right]\right)\subset\tau_t\mathcal N\left[w(\frac 1{d^q}),\frac 1{d^q}\right]$.  By the definition of $k$,  $\tau_{t}\;\mathcal N\left[w\left(\frac 1{d^q}\right), \frac 1{d^q}\right]$ and $\tau_{k}\left(A^{*-1}\tau_{t}\;\mathcal P\left[w\left(\frac 1{d^q}\right)\right]\right)$ are clearly disjoint for large $q$.

To establish tiling under dilation, note that for $q$ large enough that $A^{*-1}\left(\tau_t\mathcal P\left[w(\frac 1{d^q})\right]\right)\subset\tau_t\mathcal N\left[w(\frac 1{d^q}),\frac 1{d^q}\right]$, we have that  $\tau_{ t}\mathcal P[w(\frac 1{d^q})]\setminus\left(A^{*-1}\tau_{t}\; \mathcal P[w(\frac 1{d^q})]\right)$ tiles $\mathbb R^n$ under dilation by  $A^{*}$.   To show that $\mathcal W$ tiles under dilation as well, we must show that the outlier piece
 of $\mathcal W$ exactly fits into the notch of $\tau_{t}\;\mathcal N\left[w(\frac 1{d^q}), \frac 1{d^q}\right]$ under dilation by some integer power of $A^*$.  We have
  \begin{eqnarray*}A^{*(-pq+1)} \left(\tau_{k}\left(A^{*-1}\tau_{t}\;\mathcal P\left[w\left(\frac 1{d^q}\right)\right]\right)\right)&=&A^{*(-pq)}\left(\tau_{A^*k+t}\;\mathcal P\left[w\left(\frac 1{d^q}\right)\right]\right)\\
 &=&\frac 1{d^q}\left(\tau_{A^*k-(d^q-1)t+d^qt}\;\mathcal P\left[w\left(\frac 1{d^q}\right)\right]\right)\\
 &=&\frac 1{d^q}\left(\tau_{d^q\vec1+d^qt}\;\mathcal P\left[w\left(\frac 1{d^q}\right)\right]\right)\\
 &=&\tau_{t+\vec 1}\left(\frac1{d^q}P\left[w\left(\frac 1{d^q}\right)\right]\right),
  \end{eqnarray*} 
  which is exactly the notch of  $ \tau_{t}\;\mathcal N\left[w\left(\frac 1{d^q}\right), \frac 1{d^q}\right]$.
Thus we have that  $\mathcal W$ also tiles $\mathbb R^n$ under dilation by $A^*$.  
\end{proof}
\begin{remark} Theorem \ref{matrix} also produces simple wavelet set for scalar dilations $1<d<2$, which were not covered by Theorem \ref{main}.  For scalar dilations by $d\geq 2$, Theorem \ref{matrix} produces a series of alternative wavelet sets to those of Theorem \ref{main}.  As $q$ increases in this series, the parallelotope becomes closer to cubic, the notch becomes smaller, and the satellite becomes farther removed.  
\end{remark}

\begin{example}
\label{Mat1}
{\rm
Let $A^*=\left(\begin{array}{rrr}
3&0&0\\
0&3&0\\
1&0&-3
\end{array}\right)$.  Then $A^{2}=9I$ and $\mbox{SingularValues}(A)=\{3.54,\; 3, \;2.54\}$, so Theorem \ref{matrix} applies.   
 With $w(\frac19)=(\frac{9^3}{9^3-1},\frac{9^2}{9^3-1},\frac9{9^3-1})$, $k=(1,1,-1)$ and $t=(-\frac34,-\frac34,-\frac58)$, we have $A^{*-1}\tau_t\mathcal P\left[w(\frac 19)\right]\subset\tau_t \mathcal P\left[w(\frac 19)\right]$.  Thus, we have a simple wavelet set 
$$\mathcal W=\left( \tau_{t}\;\mathcal N\left[w\left(\frac 19\right), \frac 1{9}\right]\right)\;\setminus\;\left(A^{*-1}\tau_{t}\;\mathcal P\left[w\left(\frac 19\right)\right]\right)\;\bigcup\;\left(\tau_{(1,1,-1)}\left(A^{*-1}\tau_{t}\;\mathcal P\left[w\left(\frac 19\right)\right]\right)\right),$$
which is pictured in Figure 4. } 
\end{example}

\begin{figure}[h]
\label{matrix1}
\centering 
  \setlength{\unitlength}{200bp}
\begin{picture}(1,1)(.1,0)
\put(0,0){\includegraphics[width=\unitlength]{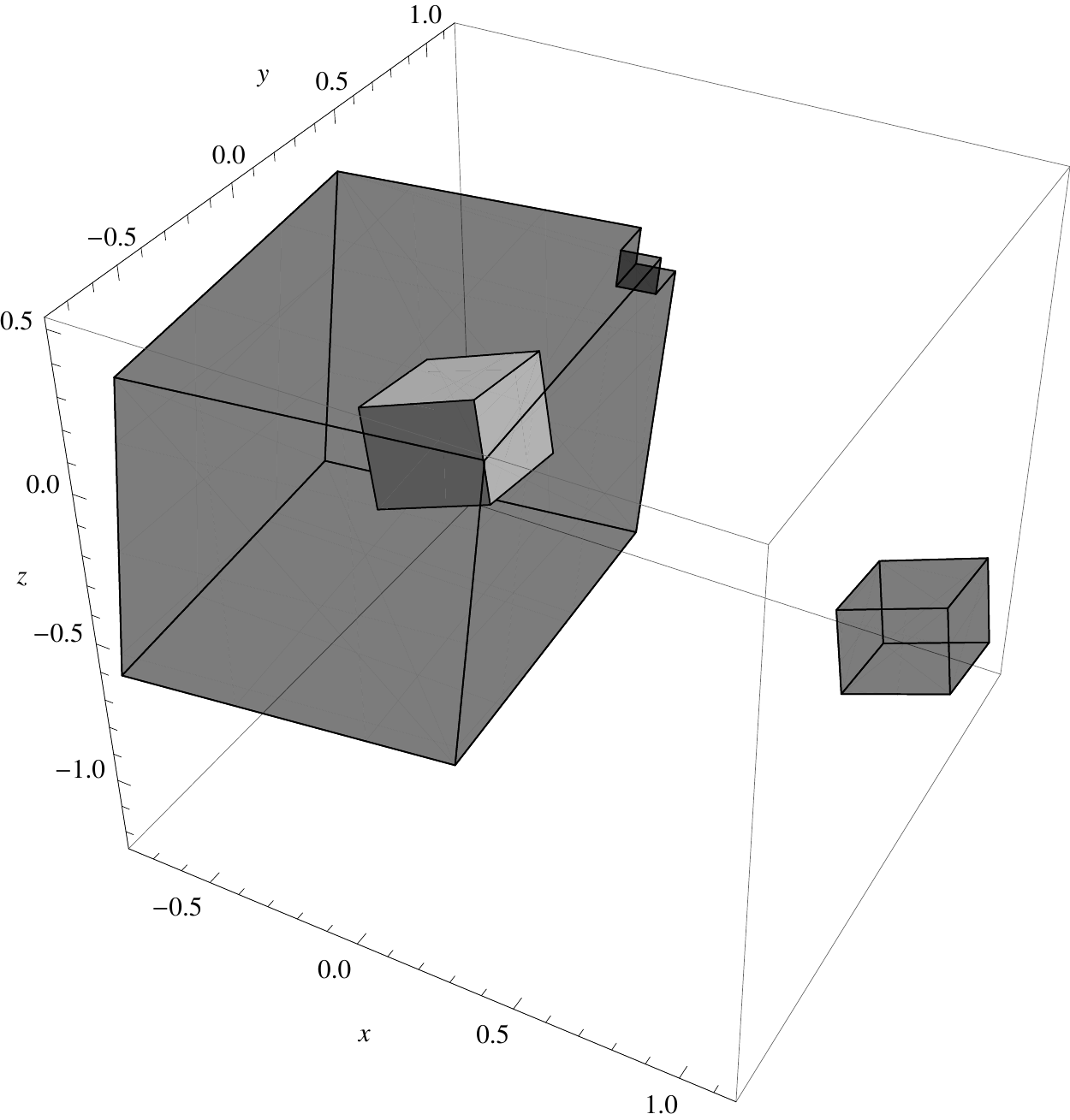}}
\end{picture}
\caption{Simple wavelet set for the matrix $A$ of Example \ref{Mat1}.}
\end{figure}

The hypothesis in Theorem \ref{matrix} that the singular values of $A$ be greater than $\sqrt n$ is sufficient but not necessary, as the next example shows.  
\begin{example}{\rm
\label{Mat}
Let $B^*=\left(\begin{array}{rrr}
2&0&1\\
0&-2&0\\
0&0&-2
\end{array}\right)$.  Then $B^{2}=4I$ and $\mbox{SingularValues}(B)=\{2.56,\; 2, \;1.56\}$, so Theorem \ref{matrix} does not apply.   With $w(\frac 14)=(\frac{64}{63},\frac{16}{63},\frac4{63})$, $k=(1,-1,-1)$ and $t=(-1,-\frac23,-\frac23)$, we do not have $B^{*-1}\tau_t\mathcal P\left[w(\frac 14)\right]\subset\tau_t \mathcal P\left[w(\frac 14)\right]$.  (See Figure  5(a).)  However, using $q=2$, with $w(\frac 1{16})=(\frac{2^{12}}{2^{12}-1},\frac{2^8}{2^{12}-1},\frac{2^4}{2^{12}-1})$, $t=-\frac 8{15}\vec 1$, and $k=(6,-4,-4)$, the required containment does hold, yielding the wavelet set 
$$\mathcal W=\left( \tau_{t}\;\mathcal N\left[w\left(\frac 1{16}\right), \frac 1{16}\right]\right)\;\setminus\;\left(B^{*-1}\tau_{t}\; \mathcal P\left[w\left(\frac 1{16}\right)\right]\right)\;\bigcup\;\left(\tau_{k}\left(B^{*-1}\tau_{t}\;\mathcal P\left[w\left(\frac 1{16}\right)\right]\right)\right).$$
 Figure 5(b) shows the central part of the wavelet set.  (The complete wavelet set also includes a translation of the missing inner parallelotope by $(6,-4,-4)$.) 
\begin{figure}[h]
\label{mat}
\centering $\begin{array}{cc}
   \setlength{\unitlength}{130bp}
\begin{picture}(1,1)(.3,0)
\put(0,0){\includegraphics[width=\unitlength]{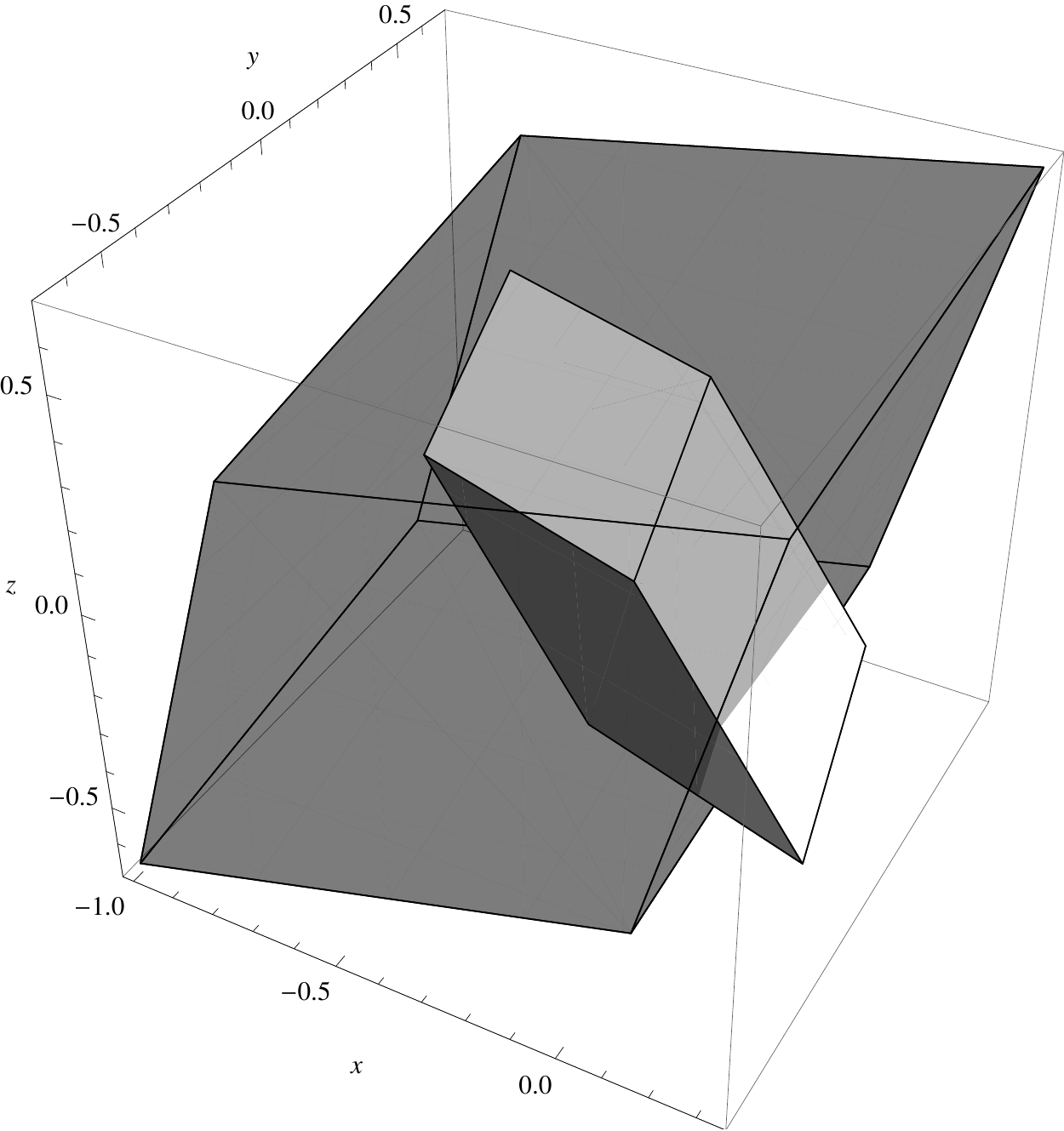}}
\end{picture}
&
\setlength{\unitlength}{130bp}
\begin{picture}(1.0,1)(0,0)
\put(0,0){\includegraphics[width=\unitlength]{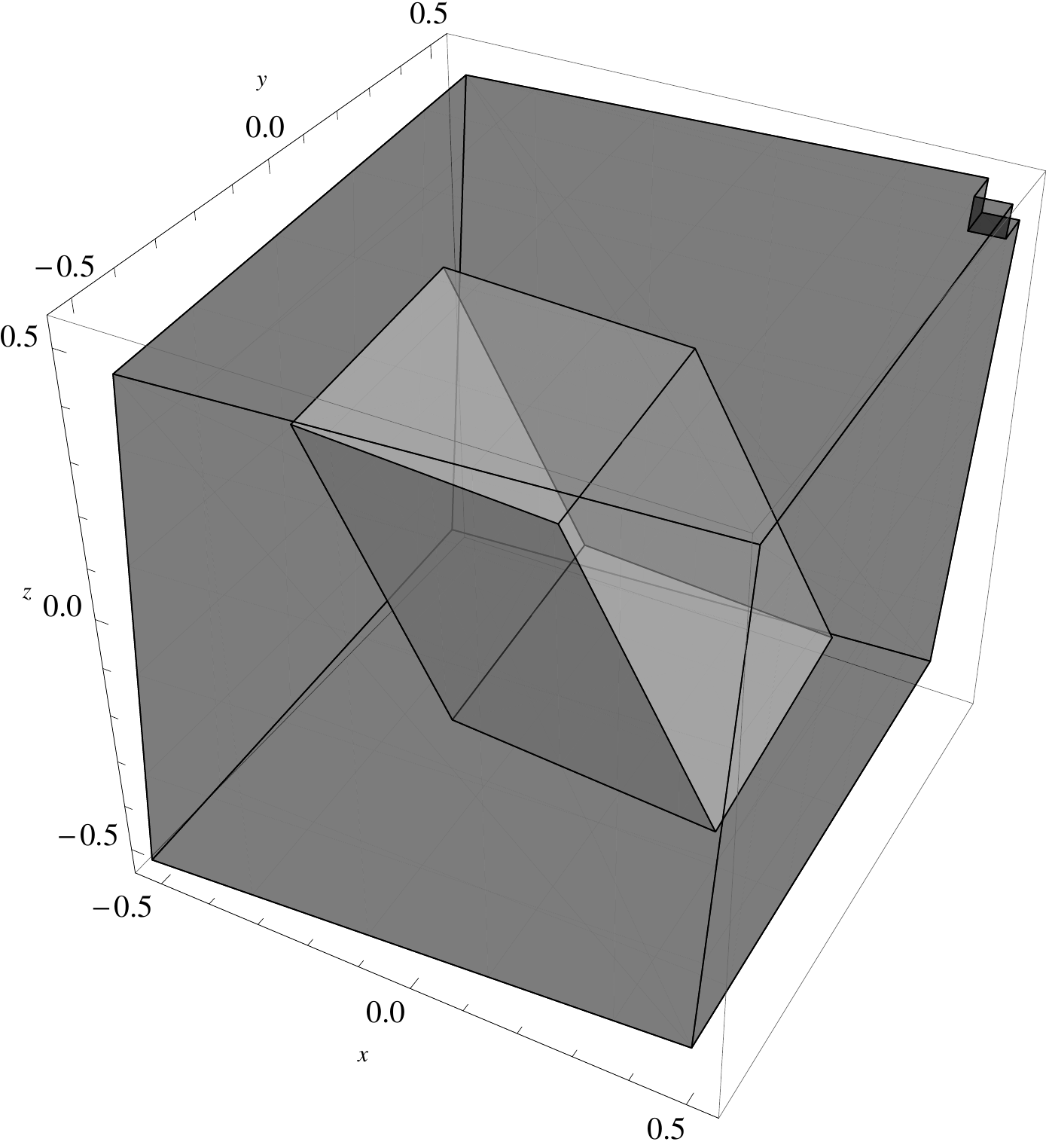}}
\end{picture}\\
\mbox{(a) }B^{*-1}\mathcal P\left[w\left(\frac14\right)\right]\not\subset\mathcal P\left[w\left(\frac14\right)\right]\qquad\qquad&\mbox{(b) }\mathcal N\left[w\left(\frac1{16}\right),\frac1{16}\right]\setminus B^{*-1}\mathcal P\left[w\left(\frac1{16}\right)\right]
      \end{array}$
 \caption{Building a simple wavelet set for the matrix $B$ of Example \ref{Mat}.}

\end{figure}

}\end{example}
\section*{Acknowledgments}
The author wishes to thank the referee for several suggestions that clarified the presentation.

\bibliographystyle{amsalpha}

\begin{thebibliography}{99}

\bibitem{bjmp}
L. Baggett, P. Jorgensen, K. Merrill, and J. Packer  (2005).
A non-MRA $C^r$ frame wavelet with rapid decay. \textit{Acta Appl. Math.} 89:251-270.

\bibitem{hkl}
L.~W.Baggett, H.~A.Medina, and K.~D.Merrill (1999). 
Generalized multi-resolution analyses and a construction procedure for all
wavelet sets in $\mathbb{R}^n$. 
\textit{J. Fourier Anal. Appl.} 5:563-573.

\bibitem{bk}
J. J. Benedetto and E.King (2009),
Smooth functions associated with wavelet sets on $\mathbb R^d$ , $d\geq1$, and frame bound gaps,
\textit{Acta Appl. Math.}, 107:121-142.

\bibitem{bl}
J. J. Benedetto and M. T. Leon (1999).
The construction of multiple dyadic minimally supported frequency wavelets on $\mathbb R^d$.
\textit{Contemp. Math.} 247:43-74.

\bibitem{besu}
J. J. Benedetto and S. Sumetkijakan (2006).
Tight frames and geometric properties of wavelet sets.
\textit{Advances in Comp. Math.} 24:35-56.

\bibitem{bs}
M. Bownik and D. Speegle (2002).
Meyer Type Wavelet Bases in $\mathbb R^2$.
\textit{J. Approx. Th.} 116:49-75.

\bibitem{dl}
X. Dai and D. R. Larson (1998).
Wandering vectors for unitary systems and orthogonal wavelets.
\textit{Mem. AMS}134: No. 640.

\bibitem{dls}
X. Dai, D. R. Larson, and D. M. Speegle (1997).
Wavelet sets in $\mathbb R^n$.
\textit{J. Fourier Anal. Appl.} 3:451-456.

\bibitem{dau}
I. Daubechies (1992).
\textit{Ten Lectures on Wavelets}.
American Mathematical Society, Providence RI.

\bibitem{gy}
J-P Gabardo and X. Yu. (2004).
Construction of wavelet sets with certain self-similarity properties.
\textit{J. Geom. Anal.} 14: 629-651.


\bibitem{hww}
E. Hern\'{a}ndez, X. Wang, and G. Weiss (1996).
Smoothing minimally supported frequency wavelets I.
\textit{J. Fourier Anal. Appl.} 2:329-340.

 \bibitem{hww2}
E. Hern\'{a}ndez, X. Wang, and G. Weiss (1997).
Smoothing minimally supported frequency wavelets II.
\textit{J. Fourier Anal. Appl.} 3:23-41.

\bibitem{m3}
K.~D.~Merrill (2008)
Smooth well-localized Parseval wavelets based on wavelet sets in $\mathbb R^2$.
\textit{Contemp. Math.} 464: 161-175.

\bibitem{m1}
K.~D.~Merrill (2008).
Simple wavelet sets for scalar dilations in $L^2(\mathbb R^2)$.
In, \textit{Wavelets and Frames: a Celebration of the Mathematical Work of Lawrence Baggett} (P. Jorgensen, K. Merrill and J. Packer eds.), Birkhauser, Boston, pp.177-192. 

\bibitem{m2}
K.~D.~Merrill (2012).
Simple wavelet sets for matrix dilations in $\mathbb R^2$ .
\textit{Num. Funct. Anal. Opt.} 33:1112-1125.  
 
\bibitem{sw}
P. M. Soardi and D. Weiland (1998). 
Single wavelets in n-dimensions.
\textit{J. Fourier Anal. Appl.} 4:299-315.

\bibitem{ss}
S.K. Stein (1990).
The notched cube tiles $\mathbb R^n$.
\textit{Discrete Math.}80:335-337.   


\end{thebibliography}

\end{document}